\documentclass[11pt,a4paper]{article}

\usepackage{inputenc}
\usepackage{amsmath}
\usepackage{bm}
\usepackage{bbold}
\usepackage{amsthm}

\usepackage{hyperref}

\setlength{\mathsurround}{1pt}

\title{Unbiased Gradient Estimation in Queueing Networks with Parameter-Dependent Routing\thanks{Proc. Intern. Conf. on Control and Information 1995 / Ed. by Wing Shing Wong. The Chinese University Press, Hong Kong, 1995. P. 351-356.}
}

\author{Nikolai K. Krivulin\thanks{Faculty of Mathematics and Mechanics, St.~Petersburg State University, Bibliotechnaya Sq.2, Petrodvorets, St.~Petersburg, 198904 Russia}}

\date{}

\newtheorem{theorem}{Theorem}

\newenvironment{algorithm}[1]{\vspace{2ex}{\sc Algorithm #1.}\bf
                                \begin{tabbing}}{\end{tabbing}}

\begin{document}

\maketitle

\begin{abstract}
A stochastic queueing network model with parameter-dependent service times and
routing mechanism, and its related performance measures are considered. An
estimate of performance measure gradient is proposed, and rather general
sufficient conditions for the estimate to be unbiased are given. A gradient
estimation algorithm is also presented, and its validity is briefly discussed.
\\

\textit{Key-Words:} queueing networks, parameter-dependent routing, performance measure gradient, unbiased estimate.
\end{abstract}

\section{Introduction}

The evaluation of performance measure gradient presents one of the main issues
of analysis of queueing network performance. Except in a few particular
models, there are generally no closed-form representations as functions of
network parameters available for performance measures and their gradients.
In this situation, one normally applies the Monte Carlo approach to estimate
gradient of network performance measures.

In the last decade, infinitesimal perturbation analysis (IPA)
\cite{HoYC83,HoYC87,Suri89} has received wide acceptance in queueing system
performance evaluation as an efficient technique underlying the calculation of
gradient estimates as well as the examination of their unbiasedness.
Specifically, this technique was employed in \cite{CaoX91} to calculate
gradient estimates in closed networks with an ordinary probabilistic routing
mechanism and general service time distributions. An extension of IPA,
smoothed perturbation analysis, has been applied in \cite{Gong92} to the
development of asymptotically unbiased gradient estimates in queueing networks
with parameter-dependent routing.

Another approach based on the analysis of algebraic representation of queueing
system dynamics and their performance has been implemented in
\cite{Kriv90a,Kriv90b,Kriv93}. This approach offers a convenient and unified
way of analytical study of gradient estimates, and it leads to computational
procedures closely similar to those of IPA. In this paper, based on this
approach, a rather general queueing network model with parameter-dependent
service times and routing mechanism is presented. For the performance
measures which one normally chooses in analysis of network performance, we
propose a gradient estimate, and give sufficient conditions for the estimate
to be unbiased. These conditions are rather general and normally met in
analysis of queueing network performance. Finally, an algorithm of estimating
gradient of a particular performance measure is presented, and its validity is
briefly discussed.

\section{The Underlying Network Model}

We consider a generalized model of a queueing network consisting of $ N $
nodes, with customers of a single class. As is customary in queueing network
models, customers are assumed to circulate through the network to receive
service at appropriate nodes. We do not restrict ourselves to a particular
type of nodes, it is suggested that any node may have a single server as well
as several servers operating either in parallel or in tandem.

Furthermore, there is a buffer with infinite capacity in each node, in which
customers are placed at their arrival to wait for service if it cannot be
initiated immediately. We assume the queue discipline underlying the operation
of any node to be first-come, first-served. Upon his service completion at one
node, each customer goes to another node chosen according to some routing
procedure described below. We suppose that the transition of any customer
between nodes requires no time, and he therefore arrives immediately into the
next node. Finally, we assume that the network starts operating at time zero;
at the initial time, the server at any node $ n $ is free, whereas its
buffer contains $ K_{n} $ customers, $ 0 \leq K_{n} \leq \infty $,
$ n=1,\ldots,N $.

We now turn to the formal description of the network dynamics from an
algebraic viewpoint, and then introduce randomness into the network model.

\subsection{Algebraic Description of Node Dynamics}

In a general sense, each node can be regarded as a processor which produces an
output sequence of departure times of customers from another two, an input and
control sequences formed respectively by the arrival times and the service
times of customers. Let us denote for every node $ n $,
$ 1 \leq n \leq N $, the $k$th arrival epoch to the node by $ A_{n}^{k} $,
and the $k$th departure epoch from the node by $ D_{n}^{k} $. Notice,
because the transition of customers from one node to another is immediate,
each $ A_{n}^{k} $ coincides with some $ D_{i}^{j} $ with the
exception of $ A_{n}^{k} =  0 $ for all $ k \leq K_{n} $. Finally, we
denote the service time associated with the $k$th service initiation in node
$ n $, by $ \tau_{n}^{k} $. The set of all service times
$ \mbox{\boldmath $T$}=\{\tau_{n}^{k} |  n=1,\ldots,N; k=1,2,\ldots \} $
is assumed to be given.

The usual way to represent the operation of a node is based on recursive
equations describing evolution of $ D_{n}^{k} $ as a state variable
\cite{Chen90,HuJQ92,Kriv94b}. Note that these recursive equations are often
rather difficult to resolve. Below are given two equations which describe
dynamics of nodes operating as the $ G/G/1 $ and $ G/G/2 $ queueing
systems. Other examples may be found in \cite{Chen90,HuJQ92,Kriv94a,Kriv94b}.

\subsubsection{The $G/G/1$ queue.}
Suppose first that node $ n $ is represented as the $ G/G/1 $ queue.
Its associated recursive equation may be written as \cite{Chen90}
$$
D_{n}^{k} = (A_{n}^{k} \vee D_{n}^{k-1}) + \tau_{n}^{k},
$$
where $ \vee $ denotes the maximum operator, and
$ D_{n}^{k} \equiv 0 $ for all $ k < 0 $. It is easy to see that the
solution of the equation in terms of arrival and service times has the form
$$
D_{n}^{k}
    = \bigvee_{i=1}^{k} \left(A_{n}^{i} + \sum_{j=i}^{k} \tau_{n}^{j} \right).
$$

\subsubsection{The $G/G/2$ queue.}
The equation which describes the dynamics of a node operating as the
$ G/G/2 $ queue may be considered as rather difficult to handle. For node
$ n $, it is written as \cite{Kriv94a}
$$
D_{n}^{k} =
 \bigvee_{i=1}^{k} \left((A_{n}^{i}\vee D_{n}^{i-2}) + \tau_{n}^{i} \right)
 \wedge \left((A_{n}^{k+1}\vee D_{n}^{k-1}) + \tau_{n}^{k+1} \right),
$$
where $ \wedge $ stands for the minimum operator. Although there are no
closed-form solutions of the equation, known to the author, it is clear that
it exists.

\subsection{Routing Mechanism and Interaction of Nodes}

The routing mechanism inherent in the network is defined by the sequences
$ \mbox{\boldmath $R$}_{n} = \{\rho_{n}^{1}, \rho_{n}^{2}, \ldots\} $
given for each node $ n $, where $ \rho_{n}^{k} $ represents the next
node to be visited by the customer who is the $k$th to depart from node
$ n $, $ \rho_{n}^{k} \in \{1, \ldots, N\} $, $ k=1,2, \ldots $. The
matrix
$$
\mbox{\boldmath $R$} =
\left( \begin{array}{ccccc}
         \rho_{1}^{1} & \rho_{1}^{2} & \ldots & \rho_{1}^{k} & \ldots \\
         \rho_{2}^{1} & \rho_{2}^{2} & \ldots & \rho_{2}^{k} & \ldots \\
         \vdots         & \vdots         &        & \vdots         &        \\
         \rho_{N}^{1} & \rho_{N}^{2} & \ldots & \rho_{N}^{k} & \ldots \\
       \end{array}
\right)
$$
is referred to as the routing table of the network.

In order to describe the dynamics of the network completely, it remains to
define formally interactions between nodes. In fact, a relationship between
arrival and departure times of distinct nodes is to be established. To this
end, for each node $ n $, let us introduce the set
\begin{equation} \label{D-def}
\mbox{\boldmath $D$}_{n}
= \{ D_{i}^{j} |  \rho_{i}^{j} = n;  i=1,\ldots,N;  j=1,2,\ldots \}
\end{equation}
which is constituted by the departure times of the customers who have to go to
node $ n $. Furthermore, we denote by $ {\cal A}_{n}^{k} $ the arrival
time of the customer which is the $k$th to arrive into node $ n $ after
his service at any node of the network. In other words, the symbol
$ {\cal A}_{n}^{k} $ differs from $ A_{n}^{k} $ in that it refers only
to the customers really arriving into node $ n $, and does not to those
occurring in this node at the initial time.

It has been shown in \cite{Kriv90b,Kriv93} that it holds
\begin{equation} \label{A1-def}
{\cal A}_{n}^{k} =
\bigwedge_{\{D_{1},\ldots,D_{k}\} \subset \bm{D}_{n}}
                              (D_{1} \vee \cdots \vee D_{k}),
\end{equation}
where minimum is taken over all $k$-subsets of the set
$ \mbox{\boldmath $D$}_{n} $. The times $ A_{n}^{k} $ and
$ {\cal A}_{n}^{k} $ are related by the equality
\begin{equation} \label{A-def}
A_{n}^{k} = \left\{\begin{array}{ll}
		       0, & \mbox{if $ k \leq K_{n} $}  \\
		       {\cal A}_{n}^{k-K_{n}}, & \mbox{otherwise}
		   \end{array}
            \right..
\end{equation}

Clearly, if deterministic routing with an integer matrix $ R $ as the
routing table is adopted in the model, each set
$ \mbox{\boldmath $D$}_{n} $, $ n=1,\ldots,N $, is determined uniquely
from (\ref{D-def}), and then a straightforward algebraic representation of
$ A_{n}^{k} $ may be obtained using (\ref{A1-def}-\ref{A-def}). In this
case, starting from the above representations of node dynamics, one may
eventually arrive at algebraic expressions for any arrival time
$ A_{n}^{k} $ and departure time $ D_{n}^{k} $, which are written in
terms of service times $ \tau \in \mbox{\boldmath $T$} $, and involve only
the operations of maximum, minimum, and addition.

\subsection{Representation of Network Performance}

One of the features of the formal network model described above is that it
offers the potential for representing network performance criteria in a rather
simple and convenient way. Suppose that we observe the network until the $K$th
service completion at node $ n $, $ 1 \leq n \leq N $. As performance
criteria for node $ n $ in the observation period, one normally chooses
the following average quantities \cite{Chen90,Kriv90a,Kriv90b,Kriv93,Kriv94b}:
$$\begin{array}{ll}
\mbox{System time of one customer,} &
S_{n}^{K} = \sum_{k=1}^{K} (D_{n}^{k}-A_{n}^{k})/K, \\ \\
\mbox{Waiting time of one customer,} &
W_{n}^{K} = \sum_{k=1}^{K} (D_{n}^{k}-A_{n}^{k}-\tau_{n}^{k})/K, \\ \\
\mbox{Throughput rate of the node,} &
T_{n}^{K} = K/D_{n}^{K}, \\ \\
\mbox{Utilization of the server,} &
U_{n}^{K} = \sum_{k=1}^{K} \tau_{n}^{k}/D_{n}^{K}, \\ \\
\mbox{Number of customers,} &
J_{n}^{K} = \sum_{k=1}^{K} (D_{n}^{k}-A_{n}^{k})/D_{n}^{K}, \\ \\
\mbox{Queue length at the node,} &
Q_{n}^{K} = \sum_{k=1}^{K} (D_{n}^{k}-A_{n}^{k}-\tau_{n}^{k})/D_{n}^{K}.
\end{array}$$
It is easy to see that with the routing mechanism determined by an integer
matrix, all these criteria may be represented only in terms of service times
in closed form.

\subsection{Stochastic Aspect and Performance Evaluation}

Let us suppose that for all $ n=1,\ldots,N $, and $ k=1,2,\ldots $, the
service times are defined as random variables
$ \tau_{n}^{k} = \tau_{n}^{k}(\theta,\omega) $, where
$ \theta \in \Theta \subset \mathbb{R} $ is a decision parameter,
$ \omega $ is a random vector which represents the random effects involved
in network behaviour. First we assume the routing table
$ \mbox{\boldmath $R$} = R $ to be an integer matrix. Since deterministic
routing leads to algebraic expressions in terms of the random variables
$ \tau \in \mbox{\boldmath $T$} $ for the performance criteria introduced
above, one can conclude that these criteria also present random variables.

Let $ F=F(\theta,\omega) $ be a random performance criterion of the
network. As is customary, we define the performance measure associated with
$ F $ as the expected value
\begin{equation} \label{E-def}
\mbox{\boldmath $F$}(\theta) = E_{\omega}[F(\theta,\omega)].
\end{equation}
Although we may express $ F $ in closed form, it is often very difficult
or impossible to obtain analytically the performance measure
$ \mbox{\boldmath $F$} $. In this situation, one generally applies a
simulation technique which allows of obtaining values of
$ F(\theta,\omega) $, and then estimate the network performance by using the
Monte Carlo approach.

We now turn to the networks with parameter-dependent probabilistic routing. We
assume $ \rho_{n}^{k} = \rho_{n}^{k}(\theta,\omega) $ to be a discrete
random variable ranging over the set $ \{1,\ldots,N\} $. The routing
mechanism of the network is now defined by the random matrix
$ \mbox{\boldmath $R$} = \mbox{\boldmath $R$}(\theta,\omega) $ with
particular routing tables as its values. We denote the set of all possible
routing tables $ R $ by $ {\cal R} $.

Obviously, the expression of $ A_{n}^{k} $ defined by
(\ref{A1-def}-\ref{A-def}) may change from one shape into another, depending
on particular routing tables. To take this into account, we now define the
random performance criteria in (\ref{E-def}) as \cite{Kriv90b,Kriv93}
\begin{equation} \label{F-def}
F(\theta,\omega,\mbox{\boldmath $R$}(\theta,\omega)) =
\sum_{R\in{\cal R}} {\bf 1}_{\{\bm{R}(\theta,\omega) = R\}}
                                                         F_{R}(\theta,\omega),
\end{equation}
where $ {\bf 1}_{\{\bm{R}(\theta,\omega)=R\}} $ is the indicator
function of the event $ \{\mbox{\boldmath $R$}(\theta,\omega) = R\} $, and
$ F_{R}(\theta,\omega) = F(\theta,\omega,R) $ is the performance criterion
evaluated under the condition that the network operates according to the
deterministic routing mechanism defined by the routing table
$ R \in {\cal R} $.

\section{Performance Measure Gradient Estimation}

Since there are generally no explicit representations as functions of system
parameters $ \theta $, available for the performance measure
$ \mbox{\boldmath $F$} $, one may evaluate its gradient
$ \partial\mbox{\boldmath $F$}(\theta)/\partial\theta $ by no way other
than through the use of either finite difference estimates
\cite{CaoX85,HoYC87} or the estimate
\begin{equation} \label{g-def}
g(\theta,\omega_{1},\ldots,\omega_{M})
= \frac{1}{M} \sum_{i=1}^{M} \frac{\partial}{\partial\theta}
                                                         F(\theta,\omega_{i}),
\end{equation}
where $ \omega_{i} $, $ i=1,\ldots,M $, are independent realization of
$ \omega $, provided that the derivative
$ \partial F(\theta,\omega) / \partial\theta $ exists.

Very efficient procedures of obtaining gradient estimates may be designed
using the IPA technique \cite{HoYC83,HoYC87,Suri89}. Such a procedure can
yield the exact values of the derivative
$ \partial F(\theta,\omega) / \partial\theta $ by performing only one
simulation run. Furthermore, in the case of a vector parameter
$ \theta \in \mathbb{R}^{d} $, the IPA procedures provide all partial
derivatives $ \partial F(\theta,\omega) / \partial \theta_{i} $,
$ i=1,\ldots,d $, simultaneously, and take an additional computational cost
which is normally very small compared with that required for the simulation
run alone. Finally, it can be easily shown \cite{CaoX85,Suri89} that if the
IPA estimate of the derivative in (\ref{g-def}) is unbiased, the mean square
error of $ g $ has the order which is significantly less than those of any
finite difference estimates based on the same number of simulation runs.

A sufficient condition for the estimate (\ref{g-def}) to be unbiased at some
$ \theta \in \Theta $ requires \cite{CaoX85,HoYC87,Suri89}
\begin{equation} \label{C-def}
\frac{\partial}{\partial\theta} E[F(\theta,\omega)]
= E\left[ \frac{\partial}{\partial\theta} F(\theta,\omega) \right].
\end{equation}
A usual way of examining the interchange in (\ref{C-def}) involves the
application of the Lebesgue dominated convergence theorem \cite{Loev60}

\begin{theorem} \label{L-t}
Let $ (\Omega,{\cal F},P) $ be a probability space,
$ \Theta \subset \mathbb{R}^{d} $, and
$ F: \Theta \times \Omega \rightarrow \mathbb{R} $ be a
${\cal F}$-measurable function for any $ \theta \in \Theta $ and such that
the following conditions hold:
\begin{description}
\item{{\rm (i)}} for every $ \theta \in \Theta $, there exists
$ \partial F(\theta,\omega) / \partial\theta $ at $ \theta $ w.p.~1,
\item{{\rm (ii)}} for all $ \theta_{1}, \theta_{2} \in \Theta $, there is
a random variable $ \lambda(\omega) $ with $ E\lambda < \infty $ and
such that
\begin{equation} \label{L-def}
| F(\theta_{1},\omega)-F(\theta_{2},\omega) |
\leq \lambda(\omega) \parallel \theta_{1}-\theta_{2} \parallel \;\;\;
\mbox{w.p.~1.}
\end{equation}
\end{description}
Then equation {\rm (\ref{C-def})} holds on $ \Theta $.
\end{theorem}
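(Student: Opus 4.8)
The plan is to prove the identity (\ref{C-def}) one coordinate at a time, reducing each component to the Lebesgue dominated convergence theorem applied to difference quotients. Fix an interior point $ \theta \in \Theta $, so that $ \theta + h e_{j} \in \Theta $ for all sufficiently small $ h $, where $ e_{j} $ denotes the $j$th coordinate vector, and fix a direction $ j \in \{1,\ldots,d\} $. For $ h \neq 0 $ set
$$
\Delta_{h}(\omega)
 = \frac{F(\theta + h e_{j}, \omega) - F(\theta,\omega)}{h}.
$$
The Lipschitz bound (\ref{L-def}), taken with a single reference point, shows that $ F(\theta,\cdot) $ is integrable for every $ \theta $ as soon as it is integrable at one point, so the expectations appearing in (\ref{C-def}) are well defined; by linearity, $ E[\Delta_{h}] = \bigl(E[F(\theta+he_{j},\omega)] - E[F(\theta,\omega)]\bigr)/h $, and the $j$th component of (\ref{C-def}) is exactly the assertion $ \lim_{h\to 0} E[\Delta_{h}] = E\bigl[\lim_{h\to 0}\Delta_{h}\bigr] $.

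The two hypotheses supply precisely the inputs required by dominated convergence. Condition (i) gives pointwise convergence: for this fixed $ \theta $ the derivative exists w.p.~1, hence $ \Delta_{h}(\omega) \to \partial F(\theta,\omega)/\partial\theta_{j} $ as $ h \to 0 $ for almost every $ \omega $. Condition (ii) supplies a dominating function: applying (\ref{L-def}) with $ \theta_{1} = \theta + h e_{j} $ and $ \theta_{2} = \theta $ yields $ |F(\theta+he_{j},\omega)-F(\theta,\omega)| \leq \lambda(\omega)\,|h| $ w.p.~1, so $ |\Delta_{h}(\omega)| \leq \lambda(\omega) $ \emph{uniformly in} $ h $, with $ E\lambda < \infty $. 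This single integrable bound, valid for every perturbation size at once, is the crucial ingredient the Lipschitz hypothesis is engineered to deliver.

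The remaining steps are routine. Each $ \Delta_{h} $ is $ {\cal F} $-measurable, being assembled from the $ {\cal F} $-measurable maps $ F(\theta+he_{j},\cdot) $ and $ F(\theta,\cdot) $; consequently its almost-sure limit $ \partial F(\theta,\cdot)/\partial\theta_{j} $ is measurable, and letting $ h \to 0 $ in the domination bound gives $ |\partial F/\partial\theta_{j}| \leq \lambda $ w.p.~1, so the limit is integrable and the right-hand side of (\ref{C-def}) makes sense. The Lebesgue dominated convergence theorem \cite{Loev60}, applied along any sequence $ h_{n} \to 0 $, then delivers $ \lim_{n} E[\Delta_{h_{n}}] = E[\partial F/\partial\theta_{j}] $.

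The one point needing care is the passage from this sequential conclusion to the genuine limit as $ h \to 0 $. Since the limiting value $ E[\partial F/\partial\theta_{j}] $ is the same for every sequence $ h_{n}\to 0 $, the sequential characterization of limits guarantees that $ \lim_{h\to 0} E[\Delta_{h}] $ exists and equals it, which is the $j$th component of (\ref{C-def}); ranging over $ j=1,\ldots,d $ completes the proof. I expect the main obstacle to be organizational rather than conceptual: ensuring the perturbed point $ \theta + h e_{j} $ remains in $ \Theta $ and that the lone integrable majorant $ \lambda $ legitimately controls every difference quotient. Once this uniform domination is in place, the interchange follows immediately.
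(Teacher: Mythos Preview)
The paper does not actually prove this theorem: it is stated without proof as a standard corollary of the Lebesgue dominated convergence theorem, with a reference to Lo\`eve. Your argument is the expected one---bound the difference quotients uniformly by the Lipschitz majorant $\lambda$ and invoke dominated convergence coordinatewise---and it is correct; there is nothing further to compare.
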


In \cite{Kriv90a,Kriv90b,Kriv93} the approach based on the implementation of
Theorem~\ref{L-t} has been applied to analyze estimates of performance
gradient in the networks models with the parameter-independent probabilistic
routing mechanism determined by a random routing table
$ \mbox{\boldmath $R$} = \mbox{\boldmath $R$}(\omega) $. Specifically,
starting from the representations of network dynamics, discussed in Section~2,
it has been shown that
\begin{description}
\item{{\rm (i)}} if each service time $ \tau \in \mbox{\boldmath $T$} $
satisfies the conditions of Theorem~\ref{L-t}, and for every
$ \theta \in \Theta $, all $ \tau \in \mbox{\boldmath $T$} $ present
continuous and independent random variables, then the average total time
$ S_{n}^{K} $ and waiting time $ W_{n}^{K} $ satisfy the conditions of
Theorem~\ref{L-t};
\item{{\rm (ii)}} if in addition to previous assumptions, there exist random
variables $ \mu, \nu > 0 $ such that for all
$ \tau \in \mbox{\boldmath $T$} $ it holds $ \nu \leq |\tau| \leq\mu $
w.p.~1 for all $ \theta \in \Theta $, and the condition
$ E[\mu\lambda/\nu^{2}] < \infty $ is fulfilled, where $ \lambda $ is
the random variable providing $ \tau $ with (\ref{L-def}), then the
average throughput rate $ T_{n}^{K} $, utilization $ U_{n}^{K} $, number
of customers $ J_{n}^{K} $, and queue length $ Q_{n}^{K} $ satisfy the
conditions of Theorem~\ref{L-t}.
\end{description}
Note that the above conditions do not involve independence at each
$ \theta $ between the random variables $ \tau(\theta,\omega) $ and
$ \rho(\omega) $ in the probabilistic sense.

\section{An Unbiased Gradient Estimate for Networks with
Parameter-Dependent Routing}

We start the section with an example which exhibits difficulties arising in
gradient estimation when there is a parameter dependence involved in the
routing mechanism of the network, and then present our main result offering an
unbiased estimate of performance measure gradient in networks with
parameter-dependent routing.

\subsection{Preliminary Analysis}

Suppose that there is a parameter dependence of the routing mechanism in the
network, that is
$ \mbox{\boldmath $R$} = \mbox{\boldmath $R$}(\theta,\omega) $. In this
case, the random performance criterion $ F $ generally violates condition
(\ref{L-def}). As an illustration, one can consider the following example.

Let $ \Theta = [0,1] $, $ \Omega_{1}=\Omega_{2}=[0,1] $, and
$ (\Omega,{\cal F},P) $ be a probability space, where $ {\cal F} $ is
the $\sigma$-field of Borel sets of
$ \Omega = \Omega_{1} \times \Omega_{2} $, $ P $ is the Lebesgue measure
on $ \Omega $. Denote $ \omega = (\omega_{1},\omega_{2})^{T} $, and define
the function
$$
F(\theta,\omega,\rho(\theta,\omega))
= \left\{\begin{array}{ll}
         \tau_{1}(\theta,\omega), & \mbox{if $ \rho(\theta,\omega)=1 $} \\
         \tau_{2}(\theta,\omega), & \mbox{if $ \rho(\theta,\omega)=2 $}
		   \end{array},
            \right.
$$
where
$$
\tau_{1}(\theta,\omega) = \theta+\omega_{1}+1, \;\;\;
\tau_{2}(\theta,\omega) = \theta+\omega_{1},
$$
and
$$
\rho(\theta,\omega)
= \left\{\begin{array}{ll}
         1, & \mbox{if $ \omega_{2} \leq \theta $} \\
         2, & \mbox{if $ \omega_{2} > \theta $}
		   \end{array}
            \right..
$$
We may treat $ \tau_{1} $ and $ \tau_{2} $ as the service time of a
customer respectively at node $ 1 $ and $ 2 $. The function $ F $ is
then assumed to be the service time of the customer which may arrive into
either node $ 1 $ or $ 2 $, according to one of the two possible values
of $ \rho $.

Clearly, $ \tau_{1} $ and $ \tau_{2} $ satisfy the conditions of
Theorem~\ref{L-t}, whereas the function $ F $ now represented as
$$
F(\theta,\omega)
= \left\{\begin{array}{ll}
         \theta+\omega_{1}+1,   & \mbox{if $ \omega_{2} \leq \theta $} \\
         \theta+\omega_{1}, & \mbox{if $ \omega_{2} > \theta $}
		   \end{array}
            \right.,
$$
is differentiable w.p.~1 at any $ \theta \in \Theta $, and
$ \partial F(\theta,\omega) / \partial\theta = 1 $ w.p.~1. However, for
any $ \theta_{1},\theta_{2} \in \Theta $ such that
$ \theta_{1} \geq \omega_{2} $ and $ \theta_{2} < \omega_{2} $, it holds
$$
| F(\theta_{1},\omega)-F(\theta_{2},\omega) | \geq 1,
$$
and therefore, condition (\ref{L-def}) is violated.

On the other hand, it is easy to verify that
$$
\begin{array}{lcl}
E[F(\theta,\omega)] = 2\theta+\frac{1}{2}, & &
\frac{\partial}{\partial\theta} E[F(\theta,\omega)] = 2 \\ \\
\frac{\partial}{\partial\theta} F(\theta,\omega) = 1 \;\; \mbox{w.p.~1}, & &
E\left[\frac{\partial}{\partial\theta} F(\theta,\omega) \right] = 1.
\end{array}
$$
In other words, equation (\ref{C-def}) proves to be not valid, and we finally
conclude that estimate (\ref{g-def}) will be biased.

\subsection{The Main Result}

To suppress the bias in estimates of the gradient
\begin{equation} \label{gF-def}
\frac{\partial}{\partial\theta}\mbox{\boldmath $F$}(\theta)
= \frac{\partial}{\partial\theta}
E[F(\theta,\omega,\mbox{\boldmath $R$}(\theta,\omega))],
\end{equation}
let us replace (\ref{g-def}) by the estimate
\begin{equation} \label{h-def}
{\widetilde g}(\theta,\omega_{1},\ldots,\omega_{M})
                           = \frac{1}{M} \sum_{i=1}^{M} G(\theta,\omega_{i}),
\end{equation}
where $ G(\theta,\omega) $ will be defined in the next theorem.

\begin{theorem} \label{M-t}
Suppose that a random performance criterion $ F $ is represented in
form {\rm (\ref{F-def})}, and for each $ R \in {\cal R} $ the following
conditions hold:
\begin{description}
\item{{\rm (i)}} $ F_{R} $ satisfies the conditions of Theorem~\ref{L-t};
\item{{\rm (ii)}} for any $ \theta \in \Theta $, the random variable
$ F_{R}(\theta,\omega) $ and the random matrix
$ \mbox{\boldmath $R$}(\theta,\omega) $ are independent;
\item{{\rm (iii)}} for any $ \theta \in \Theta $, the function
$ \Phi(\theta,R)
= \mbox{\rm Pr}\{\mbox{\boldmath $R$}(\theta,\omega)=R\} $ is continuously
differentiable at $ \theta $, and $ \Phi(\theta,R) > 0 $.
\end{description}
Then for any $ \theta_{0} \in \Theta $, estimate {\rm (\ref{h-def})} with
$$
G(\theta_{0},\omega) = \left. \frac{\partial}{\partial\theta}
                      F(\theta,\omega,\mbox{\boldmath $R$}(\theta_{0},\omega))
                                                \right|_{\theta=\theta_{0}}
+ F(\theta_{0},\omega,\mbox{\boldmath $R$}(\theta_{0},\omega))
                     \Psi(\theta_{0},\mbox{\boldmath $R$}(\theta_{0},\omega)),
$$
where $ \Psi(\theta,R) = \frac{\partial}{\partial\theta}\ln\Phi(\theta,R) $,
is unbiased.
\end{theorem}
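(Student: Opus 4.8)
The plan is to reduce the unbiasedness claim to the identity $E[G(\theta_0,\omega)]=\partial\bm{F}(\theta_0)/\partial\theta$ by computing both sides from the representation (\ref{F-def}) and matching them term by term over $ {\cal R} $. Writing $\bm{F}_R(\theta)=E[F_R(\theta,\omega)]$, the first step is to take the expectation of (\ref{F-def}). Since the indicator $\mathbf{1}_{\{\bm{R}(\theta,\omega)=R\}}$ is a function of $\bm{R}(\theta,\omega)$ alone, condition (ii) lets the expectation of each summand factor, yielding the clean decomposition
$$\bm{F}(\theta)=\sum_{R\in{\cal R}}\Phi(\theta,R)\,\bm{F}_R(\theta).$$
Differentiating this sum term by term and applying the product rule gives two families of terms. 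In the factor $\partial\Phi(\theta,R)/\partial\theta$ I would use positivity of $\Phi$ from condition (iii) to rewrite $\partial\Phi/\partial\theta=\Phi\,\Psi$ with $\Psi(\theta,R)=\partial\ln\Phi(\theta,R)/\partial\theta$; and in the factor $\partial\bm{F}_R/\partial\theta$ I would invoke condition (i), i.e. Theorem~\ref{L-t}, to pull the derivative inside the expectation, $\partial\bm{F}_R(\theta)/\partial\theta=E[\partial F_R(\theta,\omega)/\partial\theta]$. This presents $\partial\bm{F}(\theta)/\partial\theta$ as a sum over $ {\cal R} $ of terms $\Phi(\theta,R)\Psi(\theta,R)\bm{F}_R(\theta)+\Phi(\theta,R)\,E[\partial F_R/\partial\theta]$.

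The second step is to compute $E[G(\theta_0,\omega)]$ and recognize exactly this sum. Substituting (\ref{F-def}) into the two pieces of $G$, and noting that in $F(\theta,\omega,\bm{R}(\theta_0,\omega))$ the routing argument is frozen at $\theta_0$ so the indicator carries no $\theta$-dependence, I would obtain
$$G(\theta_0,\omega)=\sum_{R\in{\cal R}}\mathbf{1}_{\{\bm{R}(\theta_0,\omega)=R\}}\Big(\tfrac{\partial}{\partial\theta}F_R(\theta,\omega)\big|_{\theta=\theta_0}+F_R(\theta_0,\omega)\,\Psi(\theta_0,R)\Big).$$
Taking expectations and again using condition (ii) to factor the indicator from the $F_R$-terms (with $\Psi(\theta_0,R)$ deterministic), each summand becomes $\Phi(\theta_0,R)\big(E[\partial F_R/\partial\theta]+\Psi(\theta_0,R)\bm{F}_R(\theta_0)\big)$, which coincides with the first-step expression at $\theta=\theta_0$. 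Hence $E[G(\theta_0,\omega)]=\partial\bm{F}(\theta_0)/\partial\theta$, so the estimate (\ref{h-def}) is unbiased. Structurally this is a hybrid estimator: the first term of $G$ is the pathwise (IPA) contribution obtained with the routing frozen, and the second is the likelihood-ratio (score) contribution accounting for the $\theta$-dependence of the routing law through $\Psi=\partial\ln\Phi/\partial\theta$.

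I expect the main obstacle to lie in the two interchange-of-limit steps rather than in the algebra. First, differentiating $\sum_{R}\Phi\,\bm{F}_R$ term by term requires finiteness of $ {\cal R} $ (or a dominated-convergence argument uniform in $R$); I would justify this by observing that a criterion observed up to the $K$th departure consults only finitely many routing decisions, so $ {\cal R} $ may be taken finite. Second, and more delicately, factoring the indicator out of $E[\mathbf{1}_{\{\bm{R}(\theta_0,\omega)=R\}}\,\partial_\theta F_R(\theta,\omega)|_{\theta_0}]$ needs the derivative $\partial_\theta F_R$, not merely $F_R$ at a single $\theta$, to be independent of $\bm{R}(\theta_0,\omega)$. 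I would handle this by reading condition (ii) in the process sense, namely that the service-time family generating $F_R$ is independent of the routing family generating $\bm{R}$, so that any measurable functional of $\{F_R(\theta,\cdot)\}$, including its $\theta_0$-derivative, inherits independence from $\bm{R}(\theta_0,\cdot)$; the w.p.~1 differentiability needed for this functional to exist is supplied by condition (i).
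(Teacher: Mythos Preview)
Your argument is correct and follows essentially the same route as the paper: factor the expectation via condition~(ii) into $\sum_{R}\Phi(\theta,R)\,E[F_{R}(\theta,\omega)]$, differentiate term by term using (i) and (iii), rewrite $\partial_\theta\Phi=\Phi\,\Psi$, and then recombine using (ii) again to recognize $E[G(\theta_0,\omega)]$. The caveats you flag---finiteness of $\mathcal{R}$ for the termwise differentiation and the need for independence of $\partial_\theta F_R$ (not merely $F_R$) from $\bm{R}(\theta_0,\omega)$---are genuine subtleties that the paper's proof does not address explicitly either.
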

\begin{proof}
Clearly, it is sufficient to show that the equation
$$
\frac{\partial}{\partial\theta}
E[F(\theta,\omega,\mbox{\boldmath $R$}(\theta,\omega))]
= E[G(\theta,\omega)]
$$
holds for any $ \theta \in \Theta $.

To verify this equation, let us first represent $ F $ in form
(\ref{F-def}), and consider its expected value
$$
E[F(\theta,\omega,\mbox{\boldmath $R$}(\theta,\omega))]
= \sum_{R\in{\cal R}} E\left[{\bf 1}_{\{\bm{R}(\theta,\omega) = R\}}
                                                    F(\theta,\omega,R)\right].
$$
Since $ E[{\bf 1}_{\{\bm{R}(\theta,\omega) = R\}}]
= \mbox{\rm Pr}\{\mbox{\boldmath $R$}(\theta,\omega) = R\}
= \Phi(\theta,R) $, it follows from condition~(ii) of the theorem that
\begin{eqnarray*}
\lefteqn{E[F(\theta,\omega,\mbox{\boldmath $R$}(\theta,\omega))]}
                                                       \;\;\;\;\;\;\;\;\;\; \\
& = & \sum_{R\in{\cal R}} E[F(\theta,\omega,R)]
  \mbox{\rm Pr}\{\mbox{\boldmath $R$}(\theta,\omega) = R\}
= \sum_{R\in{\cal R}} E[F(\theta,\omega,R)] \Phi(\theta,R).
\end{eqnarray*}

For any $ \theta_{0} \in \Theta $, under conditions (i) and (iii), we
successively get
\begin{eqnarray*}
\lefteqn{\left. \frac{\partial}{\partial\theta}
E[F(\theta,\omega,\mbox{\boldmath $R$}(\theta,\omega))]
         \right|_{\theta=\theta_{0}}}                                       \\
& = & \sum_{R\in{\cal R}} \left( \left. \frac{\partial}{\partial\theta}
     E[F(\theta,\omega,R)]\right|_{\theta=\theta_{0}} \Phi(\theta_{0},R)
                                                                    \right. \\
&   & \left. \mbox{} + E[F(\theta_{0},\omega,R)] \left.
                                \frac{\partial}{\partial\theta} \Phi(\theta,R)
                                 \right|_{\theta=\theta_{0}} \right)        \\
& = &  \sum_{R\in{\cal R}} \left( E\left[\left. \frac{\partial}{\partial\theta}
  F(\theta,\omega,R)]\right|_{\theta=\theta_{0}} \right] \Phi(\theta_{0},R)
                                                                    \right. \\
&   & \left. \mbox{} + E[F(\theta_{0},\omega,R)]
                   \frac{\left. \frac{\partial}{\partial\theta} \Phi(\theta,R)
                         \right|_{\theta=\theta_{0}}}
                            {\Phi(\theta_{0},R)} \Phi(\theta_{0},R) \right) \\
& = & \sum_{R\in{\cal R}} \Biggl( E\left[ \left. \frac{\partial}{\partial\theta}
                      F(\theta,\omega,R)\right|_{\theta=\theta_{0}} \right]
+ E[F(\theta_{0},\omega,R)] \Psi(\theta_{0},R) \Biggr)
                                                         \Phi(\theta_{0},R) \\
& = & \sum_{R\in{\cal R}} E\Biggl[ \left. \frac{\partial}{\partial\theta}
                               (\theta,\omega,R)\right|_{\theta=\theta_{0}}
+ F(\theta_{0},\omega,R) \Psi(\theta_{0},R) \Biggr]
             \mbox{\rm Pr}\{\mbox{\boldmath $R$}(\theta_{0},\omega) = R\} \\
& = & E \Biggl[ \left. \frac{\partial}{\partial\theta}
               F(\theta,\omega,\mbox{\boldmath $R$}(\theta_{0},\omega))
                                                \right|_{\theta=\theta_{0}}
+ F(\theta_{0},\omega,\mbox{\boldmath $R$}(\theta_{0},\omega))
        \Psi(\theta_{0},\mbox{\boldmath $R$}(\theta_{0},\omega)) \Biggr] \\
& = & E[G(\theta_{0},\omega)].
\qedhere
\end{eqnarray*}
\end{proof}

It is not difficult to obtain the conditions for estimate (\ref{h-def}) to be
unbiased for gradient of particular performance measures. They can be stated
by combining the conditions in Section~3, related to particular
performance criteria in networks with parameter-independent routing, with
those of Theorem~\ref{M-t}. Note that these conditions are rather general, and
normally met in analysis of queueing networks.

Let us now return to the example presented in the previous subsection. First,
we have
$$ \Phi(\theta,1) = \mbox{\rm Pr}\{\rho(\theta,\omega)=1\}=\theta, $$
$$ \Phi(\theta,2) = \mbox{\rm Pr}\{\rho(\theta,\omega)=2\}=1-\theta, $$
and then
$$ \Psi(\theta,1) = \frac{d}{d\theta}\ln\theta=\frac{1}{\theta}, $$
$$ \Psi(\theta,2) = \frac{d}{d\theta}\ln(1-\theta)=\frac{1}{1-\theta}. $$
In this case, the function $ G $ is defined as
$$
G(\theta,\omega)
= \left\{\begin{array}{ll}
                     1+\frac{\theta+\omega_{1}+1}{\theta}, &
                                  \mbox{if $ \omega_{2} \leq \theta $} \\ \\
                     1+\frac{\theta+\omega_{1}}{\theta-1}, &
                                        \mbox{if $ \omega_{2} > \theta $}
		   \end{array}
            \right.
$$
for any $ \theta \in (0,1) $. Finally, evaluation of its expected value
gives
$$
E[G(\theta,\omega)]
= \frac{\partial}{\partial\theta}
E\left[F(\theta,\omega,\rho(\theta,\omega))\right] = 2.
$$

\section{Application to Network Simulation}

Consider a network with $ N $ single-server nodes, and assume that the
$K$th service completion at a fixed node $ n $, $ 1 \leq n \leq N $, comes
with probability one after a finite number of service completions in the
network. In this case, to observe evolution of the network until the $K$th
completion at the node, it will suffice to take into consideration only finite
routes defined by a right truncated routing table with integer
($N\times L$)-matrices
$$
R = \left( \begin{array}{cccc}
             r_{1}^{1} & r_{1}^{2} & \ldots & r_{1}^{L} \\
             r_{2}^{1} & r_{2}^{2} & \ldots & r_{2}^{L} \\
             \vdots    & \vdots    &        & \vdots    \\
             r_{N}^{1} & r_{N}^{2} & \ldots & r_{N}^{L}
           \end{array}
    \right)
$$
as its values, with some $ L \geq K $.

Furthermore, we assume, as is customary in network simulation, that for each
$ \theta \in \Theta $, the random variables
$ \rho_{n}^{k}(\theta,\omega) $ are independent for all
$ n=1, \ldots, N $, and $ k=1, \ldots, L $. With this condition and the
notation $ \varphi_{n}^{k}(\theta,r) =
\mbox{\rm Pr}\{\rho_{n}^{k}(\theta,\omega) = r\} $, we may represent the
function $ \Phi $ introduced in Theorem~\ref{M-t}, as
$$
\Phi(\theta,R) = \mbox{\rm Pr}\{\mbox{\boldmath $R$}(\theta,\omega) = R\}
=  \prod_{n=1}^{N} \prod_{k=1}^{L} \varphi_{n}^{k}(\theta,r_{n}^{k}),
$$
and then get the function $ \Psi $ in the form
$$
\Psi(\theta,R) = \frac{\partial}{\partial\theta}\ln\Phi(\theta,R)
= \sum_{n=1}^{N} \sum_{k=1}^{L}
\frac{\partial}{\partial\theta}\ln\varphi_{n}^{k}(\theta,r_{n}^{k}).
$$

Suppose now that we have to estimate the gradient of a performance measure,
say $ \mbox{\boldmath $U$}_{n}^{K}(\theta) = E[U_{n}^{K}(\theta,\omega)] $,
the expected value of the average utilization of the server at node $ n $.
It results from Theorem~\ref{M-t} that, as an unbiased estimate, the function
\begin{equation} \label{G-def}
G(\theta,\omega) = \frac{\partial}{\partial\theta} F(\theta,\omega,R)
+ F(\theta,\omega,R) \Psi(\theta,R),
\end{equation}
may be applied with $ R = \mbox{\boldmath $R$}(\theta,\omega) $, and
$$
F(\theta,\omega,R)
= \sum_{k=1}^{K} \tau_{n}^{k}(\theta,\omega) / D_{n}^{K}(\theta,\omega,R).
$$

It is not difficult to construct the next algorithm which produces the value
of $ G(\theta,\omega) $ for fixed
$ \theta \in \Theta \subset \mathbb{R} $, and $ \omega \in \Omega $,
provided that there is a network simulation procedure into which the algorithm
may be incorporated. It actually combines an IPA algorithm \cite{HoYC83} for
obtaining the derivative $ \partial F(\theta,\omega,R)/\partial\theta $
with additional computations according to (\ref{G-def}).

\begin{algorithm}{5.1}
{\mdseries\itshape Initialization:} \\
for $ i=1,\ldots, N $ do $ g_{i} \longleftarrow 0 $; \\
$ s,t,t^{\prime} \longleftarrow 0 $; \\
$ R \longleftarrow \mbox{\boldmath $R$}(\theta,\omega) $; \\ \\
{\mdseries\itshape Upon the $k$th service completion at node $ i $, perform the instructions:} \\
$ g_{i} \longleftarrow g_{i}
           + \frac{\partial}{\partial\theta} \tau_{i}^{k}(\theta,\omega) $; \\
if \= $ i = n $ then \=
                    $ t \longleftarrow t + \tau_{i}^{k}(\theta,\omega) $; \\
   \> \>  $ t^{\prime} \longleftarrow t^{\prime}
           + \frac{\partial}{\partial\theta} \tau_{i}^{k}(\theta,\omega) $; \\
   \> \> if $ k = K $ then \=
                         $ d \longleftarrow D_{n}^{K}(\theta,\omega,R) $; \\
   \> \> \>              stop; \\
$ r \longleftarrow r_{i}^{k} $; \\
$ s \longleftarrow s
           + \frac{\partial}{\partial\theta}\ln\varphi_{i}^{k}(\theta,r) $; \\
if {\mdseries\itshape the server of node $ r $ is free} then
$ g_{r} \longleftarrow g_{i} $.
\end{algorithm}

On completion of the algorithm, it remains to compute
$ (t^{\prime}d - t g_{n})/d^{2} + t s/d $ as the value of $ G $.

Note, in conclusion, that estimate (\ref{g-def}) with the function $ G $
evaluated using the algorithm will not be unbiased in general. For the
estimate to be unbiased, the function $ \Psi(\theta,R) $ in (\ref{G-def})
must be calculated as the sum with the same number of summands
$ \partial \ln\varphi_{n}^{k}(\theta,r)/\partial\theta $ for any of 
simulation runs. However, during the simulation runs with distinct 
realizations of $ \omega $, there may be different numbers of service 
completions encountered at nodes $ i \neq n $, and then considered in 
evaluation of $ \Psi $. This normally involves an insignificant error in 
estimating the gradient, which becomes inessential as $ K $ increases.

\section{Acknowledgements}

The research described in this publication was made possible in part by Grant
\# NWA000 from the International Science Foundation.

\bibliographystyle{utphys}

\bibliography{Unbiased_gradient_estimation_in_queueing_networks_with_parameter-dependent_routing}

\end{document}